\def\C{{\mathbb C}} 
\def\N{{\mathbb N}}
\def\Z{{\mathbb Z}}
\def\calE{{\mathcal E}}
\def\ftilde{\tilde{f}}
\def\rmd{{\mathrm d}}
\def\rme{{\mathrm e}}
\def\rmi{{\mathrm i}}
\renewcommand{\and}{\; \hbox{ and }\; }
\renewcommand{\for}{\; \hbox{ for }\; }
\newtheorem*{theorem*}{Theorem}
\newtheorem{theorem}{Theorem}
\newtheorem{corollary}{Corollary}
\newtheorem{proposition}{Proposition}
\newtheorem{lemma}{Lemma}
\def\atop#1#2{
\genfrac{}{}{0pt} {} 
{#1} 
{#2}}
\begin{document} 
 
 \null
 \vskip -3 true cm

 \hfill{\small 
Update: {\em \today
}}

{\small 

\noindent
 \href{https://www.indianmathsociety.org.in/ms.htm}{The Mathematics Student}, February 2022.
\\
Proceedings of the 87th Annual Conference of the 
 \\
 Indian Mathematical Society, December 2021. 
 }
 
 \bigskip
\begin{center}
{
\Large
\bf 
Lidstone interpolation 

I. One variable
\\

\bigskip
\large
 Michel Waldschmidt
 
 \hskip .01 true cm
 \vtop{\hsize 1cm \kern 1mm
 \hrule width 16mm \kern 1mm}
 }

 \sl
 Sorbonne Université and Université de Paris, 
 \\
 CNRS, IMJ-PRG, F-75005 Paris, France

 \tt
 \href{mailto:michel.waldschmidt@imj-prg.fr}{michel.waldschmidt@imj-prg.fr}
 
 \url{http://www.imj-prg.fr/~michel.waldschmidt}

\end{center}

\tableofcontents

\section*{Abstract} 
 
According to Lidstone interpolation theory, an entire function of exponential type $<\pi$ is determined by it derivatives of even order at $0$ and $1$. This theory can be generalized to several variables. Here we survey the theory for a single variable. Complete proofs are given. This first paper of a trilogy is devoted to Univariate Lidstone interpolation; Bivariate and Multivariate Lidstone interpolation will be the topic of two forthcoming papers. 
 
\bigskip
\subsection*{Keywords} 
Lidstone polynomials, exponential type, analytic functions of one variable.

\bigskip
\noindent
{\bf AMS Mathematics Subject Classification 2020}:

Primary:  
41A58  

Secundary: 
30D15 

\bigskip
\noindent
{\bf Acknowledgments}. This is an expanded version of the first part of a plenary talk given at  the 87th Annual Conference of the 
 Indian Mathematical Society in December 2021. The second part of the talk was devoted to several variables. This paper and the two forthcoming ones on two and several variables were completed at  The Institute of Mathematical Sciences (IMSc) Chennai, India, where the author gave a course in December 2021 under the Indo-French Program for Mathematics. The author is thankful to IMSc for its hospitality and the Laboratoire International  Franco--Indien for its support.

 \section{Introduction} 
 
 In 1930, in a seminal paper \cite{zbMATH02567100}, G.~J. Lidstone introduced a basis $\Lambda_k(z)$ ($k\ge 0$) of the space $\C[z]$ of polynomials in a single variable, which has the property that any polynomial $f\in\C[z]$ has a finite expansion 
 $$
 f(z)=\sum_{k\ge 0} f^{(2k)}(0)\Lambda_k(1-z)+\sum_{k\ge 0} f^{(2k)}(1)\Lambda_k(z),
$$
where 
$$
f^{(2k)} =\left(\frac \rmd {\rmd z}\right)^{2k} f.
$$
Two years later, H.~Poritsky \cite{Poritsky}  J.~M.~Whittaker \cite{Whittaker} extended these expansions to entire functions of exponential type $<\pi$. In 1936, I.J.~Schoenberg \cite{Schoenberg} proved that the only entire functions of finite exponential type which vanish at the two points $0$ and $1$ together with all their derivatives of even order are the linear combinations with constant coefficients of the functions $\sin(k\pi z)$, with $k\in\N$. This result follows from an expansion formula for such functions which was obtained by R.C. Buck in 1955 \cite{Buck}. 

We recall the basic facts concerning Lidstone expansion in a single variable. In this section $z$ and $\zeta$ are in $\C$. It will be convenient to use notations which can be generalized to several variables. The classical Lidstone polynomials \cite[\S 6 p.~18]{zbMATH02567100} 
 $\Lambda_k(z)$ ($k\ge 0$)  are denoted here $\Lambda_{2k,1}(z)$,  
 while the polynomials $\Lambda_k(1-z)$ are written here $\Lambda_{2k,0}(z)$. The successive derivatives of a function $f$ of a single variable are denoted $f'$, $f''$, \dots, $f^{(k)}$. 

 The Lidstone polynomials are introduced in Theorem \ref{Theorem:Lidstone}. They are the solution of a system of differential equations (Lemma \ref{Lemma:equadiffunevariable}).
The unicity of the expansion for an entire function of exponential type $<\pi$ (Theorem \ref{TheoremPoristkyUnicite-n=1}) is easy to prove, the existence (Theorem \ref{Theorem:Poritsky}) needs more work - both results are due to H.~Poritsky and J.~M.~Whittaker. Next we prove integral formulae for the Lidstone polynomials (Propositions \ref{Prop:IntegralFormulaLidstone} and \ref{Prop:IntegralFormula2}), and we give proofs of the results of Buck (Proposition \ref{Prop:Buck}) and Schoenberg (Corollary \ref{Corollary:Schoenberg}).

 This paper is self contained, full proofs are given. It is an introduction to two forthcoming papers,  
\cite{BivariateLidstone}
where we extend the theory to two variables
and 
\cite{MultivariateLidstone}
where we extend the theory to an arbitrary number of variables.

 \section{Definition of the univariate Lidstone polynomials} \label{SS:Introduction1}
Let us recall the definitions of the \emph{order} of an entire function $f$:
$$
\varrho(f)=\limsup_{r\to\infty} \frac{\log\log |f|_r}{\log r}
\;
\text{ where } 
\;
|f|_r=\sup_{|z|=r}|f(z)|
$$
and of the 
\emph{exponential type} of $f$: 
$$
\tau(f)= \limsup_{r\to\infty} \frac{ \log |f|_r}{r}\cdotp
$$
If the exponential type of $f$ is finite, then $f$ has order $\le 1$. If $f$ has order $<1$, then the exponential type is $0$. 
Using Cauchy's estimate for the coefficients of the Taylor series together with Stirling's formula for $n!$, one deduces
\cite[Lemma 1]{Whittaker}  that if $f$ has exponential type $\tau(f)$, then for all $z_0\in\C$, 
$$
\limsup_{n\to\infty} |f^{(n)}(z_0)|^{1/n}= \tau(f).
$$ 
For $\zeta\in\C\setminus\{0\}$, the function $\rme^{\zeta z}$ has order $1$ and exponential type $|\zeta|$.

 \bigskip
 We denote by  $2\N$ the set of even nonnegative integers.
 The starting point of the theory of Lidstone interpolation is the following.

\begin{lemma} \label{Lemma:UneVariable}
Let $f$ be a polynomial satisfying 
\begin{equation}\label{equation:lemme1}
 f^{(t)}(0)= f^{(t)}(1)=0 \hbox{ for all $t \in 2\N$}.
\end{equation}
Then $f=0$.
\end{lemma}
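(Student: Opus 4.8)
The plan is to argue by induction on the degree of $f$, equivalently by a finite descent obtained by repeatedly differentiating twice. The decisive structural observation is that hypothesis \eqref{equation:lemme1} is \emph{stable under taking the second derivative}: since $(f'')^{(t)}=f^{(t+2)}$ and $t+2$ is even whenever $t$ is even, the polynomial $f''$ again satisfies $(f'')^{(t)}(0)=(f'')^{(t)}(1)=0$ for every $t\in 2\N$. Thus the class of polynomials obeying \eqref{equation:lemme1} is closed under the map $f\mapsto f''$, and this lets me descend to low degree while retaining the full list of conditions.

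Concretely, I would suppose $f\neq 0$ and set $n=\deg f\geq 0$. Putting $k=\lfloor n/2\rfloor$, I consider $g=f^{(2k)}$. By the stability observation $g$ still satisfies \eqref{equation:lemme1}; in particular, reading \eqref{equation:lemme1} for $g$ at order $t=0$ gives $g(0)=f^{(2k)}(0)=0$ and $g(1)=f^{(2k)}(1)=0$, which is legitimate precisely because $2k\in 2\N$. On the other hand $g$ has degree $n-2k\in\{0,1\}$, so $g$ is affine and nonzero. But an affine polynomial vanishing at the two distinct points $0$ and $1$ is identically zero, so $g=0$, contradicting $\deg g = n-2k\geq 0$. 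Hence no nonzero $f$ can satisfy \eqref{equation:lemme1}, that is, $f=0$. Phrased as a genuine induction, the same reasoning reads: if $\deg f\geq 2$ then $f''$ is a polynomial of strictly smaller degree satisfying \eqref{equation:lemme1}, hence $f''=0$ by the inductive hypothesis, forcing $\deg f\leq 1$; and the base case $\deg f\leq 1$ is disposed of by $f(0)=f(1)=0$ alone.

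I do not anticipate a genuine obstacle here. The only point that requires care is recognizing that double differentiation preserves the \emph{entire} list of interpolation conditions, rather than losing the conditions of highest order, after which the problem collapses to the trivial affine base case. It is worth noting that a naive dimension count does not suffice: the even orders $t\le n$ number $\lfloor n/2\rfloor+1$, giving $2(\lfloor n/2\rfloor+1)\geq n+1$ linear conditions on the $(n+1)$-dimensional space of polynomials of degree $\le n$, which is only borderline (equality when $n$ is odd) and in any case says nothing about their independence. This is exactly why the descent argument, rather than counting, is the correct and clean tool.
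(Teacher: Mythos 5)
Your proof is correct and is essentially the paper's own first proof of this lemma: induction (or descent) on the degree, using the key observation that the hypothesis \eqref{equation:lemme1} is stable under $f\mapsto f''$, with the base case $\deg f\le 1$ settled by $f(0)=f(1)=0$. The paper also offers two alternative proofs (one via periodicity of odd reflections, one via a triangular linear system), but your argument coincides with the first.
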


We give three proofs of this lemma, the arguments are slightly different and will be used again. 
 
\begin{proof}[First proof]
By induction on the total degree of the polynomial $f$.

If $f$ has degree $\le 1$, say $f(z)=a_0z+a_1$, the conditions $f(0)=f(1)=0$ imply $a_0=a_1=0$, hence $f=0$.

If $f$ has degree $\le d$ with $d\ge 2$ and satisfies the hypotheses, then $f''$ also satisfies the hypotheses and has degree $<d$, hence by induction $f''=0$ and therefore $f$ has degree $\le 1$. 
 
Lemma \ref{Lemma:UneVariable} follows. 
\end{proof}

\begin{proof}[Second proof]
Let $f$ be a polynomial satisfying \eqref{equation:lemme1}.
The assumption $f^{(t)}(0)=0$ for all $t \in 2\N$ means that $f$ is an odd function: $f(-z)=-f(z)$. 
The assumption $ f^{(t)}(1)=0$ for all $t\in 2\N$  means that $f(1-z)$ is an odd function: $f(1-z)=-f(1+z)$. 
We deduce 
$$
f(z+2)=f(1+z+1)=-f(1-z-1)=-f(-z)=f(z),
$$
hence the polynomial $f$ is periodic, and therefore is a constant. 
 
Since $f(0)=0$, we conclude $f=0$. 
\end{proof}

\begin{proof}[Third proof]
 Assume \eqref{equation:lemme1}. Write 
 $$
 f(z)=a_1z+a_3 z^3+a_5z^5+a_7z^7++\cdots+a_{2m+1}z^{2m+1}+\cdots
 $$
(finite sum). 
 We have $f(1)=f''(1)=f^{\mathrm{(iv})}(1)=\cdots=0$:
 $$ 
 \begin{matrix}
 a_1&+a_3 &+a_5 \hfill&+a_7\hfill
 &+\cdots&+a_{2n+1}\hfill&+\cdots=0
 \\ 
 	& \;\; 6a_3&+20a_5&+42a_7 \hfill&+\cdots&+2m(2m+1)a_{2m+1}&+\cdots=0
	\\ 
 	& &\; \;120a_5&+840a_7 &+\cdots&+\frac{(2m+1)!}{(2m-3)!}a_{2m+1}\hfill&+\cdots=0
	\\
	& & & & & \ddots &\vdots 
 \end{matrix}
 $$
The matrix of this system is triangular with maximal rank. We conclude $a_1=a_3=a_5=\cdots=0$.
\end{proof}
 
 The fact that this matrix has maximal rank means that a polynomial $f$ is uniquely determined by the numbers 
$$
 f^{(t)}(0) \and f^{(t)}(1) \for t\in 2\N.
$$

Let $T\ge 0$ be even. The space $\C[z]_{\le T+1}$ of polynomials of degree $\le T+1$ has dimension $T+2$. All elements $f\in \C[z]_{\le T+1}$ satisfy $f^{(k)}=0$ for $k\ge T+2$. Lemma \ref{Lemma:UneVariable} shows that the linear map 
 $$
 \begin{matrix}
 \C[z]_{\le T+1}& \longrightarrow& \C^{T+2}
 \\
 f&\longmapsto & \bigl( f^{(t)}(0),\; f^{(t)}(1)\bigr)_{0\le t\le T, \; t \in 2\N}
 \end{matrix}
 $$
 is injective. Hence it is an isomorphism.

Given numbers $a_t$ and $b_t$, ($t\in 2\N$), where all but finitely many of them are $0$, there is a unique polynomial $f$ such that 
$$
 f^{(t)}(0)=a_t \and f^{(t)}(1) =b_t\hbox{ for all $t\in 2\N$}.
$$
 In particular, for each  $t\in 2\N$, there is a unique polynomial $\Lambda_{t,0}$ which satisfies 
$$
\Lambda^{(\tau)}_{t,0}(0)= \delta_{t,\tau} \and \Lambda^{(\tau)}_{t,0}(1)=0 \for \tau \in 2\N
$$
(Kronecker symbol), and there is a unique polynomial $\Lambda_{t,1}$ which satisfies 
$$
\Lambda^{(\tau)}_{t,1}(0)=0 \and \Lambda^{(\tau)}_{t,1}(1)=\delta_{t,\tau} \for \tau \in 2\N.
$$
Therefore:

 \begin{theorem}
[G. J.~Lidstone (1930)]\label{Theorem:Lidstone}
There exist two sequences of polynomials, $\bigl(\Lambda_{t,0}(z)\bigr)_{t\in 2\N}$,  $\bigl(\Lambda_{t,1}(z)\bigr)_{t \in 2\N}$, such that any polynomial $f$ can be written as a finite sum 
\begin{equation}\label{Equation:LidstoneExpansionUneVariable}
f(z)=\sum_{t\in 2\N}
 f^{(t)}(0) \Lambda_{t,0}(z)+
\sum_{t\in 2\N} f^{(t)}(1) \Lambda_{t,1}(z).
\end{equation}
\end{theorem}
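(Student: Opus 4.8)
The existence of the two sequences is already in hand: for each $t\in 2\N$, the polynomials $\Lambda_{t,0}$ and $\Lambda_{t,1}$ have been singled out by the Kronecker conditions
$$
\Lambda_{t,0}^{(\tau)}(0)=\delta_{t,\tau},\quad \Lambda_{t,0}^{(\tau)}(1)=0,\quad
\Lambda_{t,1}^{(\tau)}(0)=0,\quad \Lambda_{t,1}^{(\tau)}(1)=\delta_{t,\tau}
\for \tau\in 2\N,
$$
so the only thing left to establish is the expansion formula \eqref{Equation:LidstoneExpansionUneVariable} itself. The plan is to reduce this to the uniqueness already packaged in Lemma \ref{Lemma:UneVariable}.

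First I would fix an arbitrary polynomial $f$ and choose an even integer $T$ with $\deg f\le T+1$; then $f^{(t)}=0$ for every $t\ge T+2$, so in particular $f^{(t)}(0)=f^{(t)}(1)=0$ for all even $t\ge T+2$. This guarantees that both sums in \eqref{Equation:LidstoneExpansionUneVariable} are finite, so the right-hand side is a bona fide polynomial
$$
g(z)=\sum_{t\in 2\N} f^{(t)}(0)\,\Lambda_{t,0}(z)+\sum_{t\in 2\N} f^{(t)}(1)\,\Lambda_{t,1}(z).
$$

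Next I would compute the even derivatives of $g$ at $0$ and at $1$. Because the sum is finite, differentiation passes through it, and for each $\tau\in 2\N$ the Kronecker conditions leave exactly one surviving term in each evaluation, giving $g^{(\tau)}(0)=f^{(\tau)}(0)$ and $g^{(\tau)}(1)=f^{(\tau)}(1)$. Consequently $f-g$ satisfies hypothesis \eqref{equation:lemme1}, and Lemma \ref{Lemma:UneVariable} forces $f-g=0$, which is precisely \eqref{Equation:LidstoneExpansionUneVariable}.

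I do not expect any genuine obstacle: all the substance has been front-loaded into Lemma \ref{Lemma:UneVariable} and the preceding construction of the $\Lambda_{t,j}$. The only step deserving a moment's attention is the finiteness of the two sums, which holds exactly because $f$, being a polynomial, has only finitely many nonvanishing derivatives.
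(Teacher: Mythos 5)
Your proof is correct and follows the same route as the paper: the paper constructs $\Lambda_{t,0},\Lambda_{t,1}$ from the isomorphism furnished by Lemma \ref{Lemma:UneVariable} and then states the theorem with the expansion left implicit, which is exactly the step you spell out (finiteness of the sums, matching the even derivatives at $0$ and $1$ via the Kronecker conditions, and applying Lemma \ref{Lemma:UneVariable} to $f-g$). No gaps.
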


The involution  $z\mapsto 1-z$:
$$
0\mapsto 1, \quad 1\mapsto 0, \quad 1-z\mapsto z
$$
 shows that $\Lambda_{t,0}(z)=\Lambda_{t,1}(1-z)$.

 At this point, we can make an analogy with Taylor series, where the polynomials $z^m/m!$
satisfy
$$
\frac{\rmd^k}{\rmd z^k}
\left(
\frac{z^m}{m!}
\right)_{z=0}
=\delta_{mk} \for m\ge 0
\and k\ge 0.
$$ 
Given a sequence $(a_m)_{m\ge 0}$ of complex numbers, the unique analytic solution (if it exists) $f$ of the interpolation problem 
$$
f^{(m)}(0)=a_m\hbox{ for all $m\ge 0$}
$$
is given by the Taylor expansion
 $$
f(z)=\sum_{m\ge 0} a_m\frac{z^m}{m!}\cdotp
$$
Lidstone expansion  replaces the single point $0$ and the sequence of all derivatives with two points $0$ and $1$ and only the derivatives of even order at these two points. 

The first Lidstone polynomial is $ \Lambda_{0,1}(z)=z$:
 $$
 \Lambda_{0,1}(0)=0, \quad  \Lambda_{0,1}(1)=1, \quad  \Lambda_{0,1}^{(t)}(0)=\Lambda_{0,1}^{(t)}(1)=0 \for t\in 2\N, \;  t\ge 2.
 $$
 The next lemma provides an  inductive way for finding all of them. 
 
  \section{Differential equation}\label{SS:equadiff1}
  
\begin{lemma}\label{Lemma:equadiffunevariable}
The sequence of Lidstone polynomials $\bigl( \Lambda_{t,1}\bigr)_{t\in 2\N}$ is determined by $ \Lambda_{0,1}(z)=z$ and 
$$
\Lambda''_{t,1}=\Lambda_{t-2,1} \for t\ge 2 \text{ even},
$$
with the initial conditions $ \Lambda_{t,1}(0)=\Lambda_{t,1}(1)=0$ for $t\in 2\N$, $t\ge 2$. 
More precisely, let $\bigl( L_t \bigr)_{t \in 2\N}$  be a sequence of polynomials satisfying 
$ L_0(z)=z$ and 
$$
L''_t=L_{t-2} \for  t\in 2\N, \;  t\ge 2,
$$
with the initial conditions $ L_t(0)=L_t(1)=0$ for $t\in 2\N$, $t\ge 2$; then $L_t=\Lambda_{t,1}$ for all $t \in 2\N$. 
\end{lemma}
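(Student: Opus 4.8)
The plan is to prove the statement in two steps: first that the Lidstone polynomials $\Lambda_{t,1}$ themselves satisfy the displayed recursion $\Lambda''_{t,1}=\Lambda_{t-2,1}$ together with the boundary conditions, and then that these data determine the whole sequence, which is the content of the ``more precisely'' part.

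For the first step, I would fix an even $t\ge 2$ and put $g=\Lambda''_{t,1}$. Recall that $\Lambda_{t-2,1}$ is, by definition, the unique polynomial with $\Lambda^{(\tau)}_{t-2,1}(0)=0$ and $\Lambda^{(\tau)}_{t-2,1}(1)=\delta_{t-2,\tau}$ for every $\tau\in 2\N$; so it is enough to verify that $g$ meets these same requirements. Since $g^{(\tau)}=\Lambda^{(\tau+2)}_{t,1}$ and $\tau+2$ is again an even nonnegative integer, the defining relations of $\Lambda_{t,1}$ give at once $g^{(\tau)}(0)=0$ and $g^{(\tau)}(1)=\delta_{t,\tau+2}$. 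The key elementary observation is that $\delta_{t,\tau+2}=\delta_{t-2,\tau}$, so $g$ satisfies precisely the interpolation data characterizing $\Lambda_{t-2,1}$; by the uniqueness coming from Lemma~\ref{Lemma:UneVariable} this forces $g=\Lambda_{t-2,1}$. The boundary values $\Lambda_{t,1}(0)=\Lambda_{t,1}(1)=0$ for even $t\ge 2$ are read off directly from the definition by taking $\tau=0$ (note $\delta_{t,0}=0$ when $t\ge 2$).

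For the second step, let $(L_t)_{t\in 2\N}$ be any sequence satisfying the hypotheses, and argue by induction on $t$ that $L_t=\Lambda_{t,1}$. The base case is the assumed identity $L_0(z)=z=\Lambda_{0,1}(z)$. For the inductive step, supposing $L_{t-2}=\Lambda_{t-2,1}$, the recursion for $L_t$ together with the first step yields $(L_t-\Lambda_{t,1})''=L_{t-2}-\Lambda_{t-2,1}=0$; hence $L_t-\Lambda_{t,1}$ is an affine polynomial, and since it vanishes at the two points $0$ and $1$ it must be identically zero.

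I do not anticipate any serious obstacle. The only point that needs care is the bookkeeping in the first step -- namely that differentiating twice shifts the Kronecker index by two, giving the matching identity $\delta_{t,\tau+2}=\delta_{t-2,\tau}$ -- after which both steps reduce to facts already available: the uniqueness of the Lidstone interpolation problem and the triviality that an affine function vanishing at two distinct points is zero.
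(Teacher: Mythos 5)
Your proposal is correct and follows essentially the same route as the paper: the uniqueness part is the identical induction, where $(L_t-\Lambda_{t,1})''=0$ forces $L_t-\Lambda_{t,1}$ to be affine and hence zero since it vanishes at $0$ and $1$. The only difference is that your first step spells out, via the Kronecker-index shift $\delta_{t,\tau+2}=\delta_{t-2,\tau}$ and the uniqueness from Lemma~\ref{Lemma:UneVariable}, the verification that the paper dismisses as ``plain''.
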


Notice that the assumption $ L_0(z)=z$ cannot be omitted: given any polynomial $A$, there is a unique sequence $\bigl( L_t \bigr)_{t \in 2\N}$  satisfying all other assumptions but with $L_0=A$. 
 
   \begin{proof}
  That the sequence $\bigl( \Lambda_{t,1}\bigr)_{t\in 2\N}$ satisfies these conditions is plain. We now prove the unicity. Let $\bigl(L_t\bigr)_{t  \in 2\N}$,  be a sequence of polynomials satisfying the conditions of Lemma \ref{Lemma:equadiffunevariable}. By assumption  $L_0(z)=z$. By induction, assume that  for some $t\ge 2$ we know that $L_{t-2}=\Lambda_{t-2,1}$. Then the difference $g=L_t-\Lambda_{t,1}$ satisfies $g''=0$, hence $g$ has degree $\le 1$. The assumptions $ L_t(0)=L_t(1)=0$ for $t\in 2\N$, $t\ge 2$ imply $g=0$. 
  \end{proof}

For $t \in 2\N$, the polynomial $\Lambda_{t,1}$ is odd, it has degree $t+1$ and leading term $\frac{1}{(t+1)!}z^{t+1}$.
For instance
$$ 
\Lambda_{2,1}(z)=\frac{1}{6} (z^3-z)=\frac{1}{6}z (z-1)(z+1), 
$$
$$
 \Lambda_{2,0}(z)=\Lambda_{2,1}(1-z)=-\frac{z^3}{6}+\frac{z^2} 2 -\frac z 3=-\frac{1}{6} z(z-1)(z-2),
$$ 
$$
 \Lambda_{4,1}(z)=\frac{1}{120} z^5-\frac{1}{36} z^3+\frac{7}{360} z= \frac{1}{360}z(z^2-1)(3z^2-7),
 $$ 
 and
\begin{equation}\label{eq:Lambda40}
\begin{aligned}
 \Lambda_{4,0}(z)=\Lambda_{4,1}(1-z)
 &=-\frac{1}{120} z^5+\frac{1}{24} z^4-\frac{1}{18} z^3+\frac{1}{45} z
 \\ 
 &=- \frac{1}{360}z(z-1)(z-2)(3z^2-6z-4).
\end{aligned}
\end{equation}

  \section{Recurrence formula}\label{SS:recurrence}

 For $t \in 2\N$, the polynomial $f_t(z)= z^{t+1}$ satisfies 
$$
f_t^{(\tau)}(0)=0  \for \tau \in 2\N, \quad 
f_t^{(\tau)}(1)=
\begin{cases} 
\frac {(t+1)!}{(t-\tau+1)!}&\hbox{ {for} $0\le \tau\le t$, $\tau\in 2\N$}
\\
0&\hbox{ {for} $\tau\ge t+2$, $\tau\in2\N$.}
\end{cases}
$$
From Theorem \ref{Theorem:Lidstone} one deduces, for $t \in 2\N$,
$$
 z^{t+1} =\sum_{\atop{0\le \tau\le t}{\tau \in 2\N}} \frac {(t+1)!} {(t-\tau+1)!} \Lambda_{\tau,1}(z), 
$$
which yields the recurrence formula
\begin{equation}\label{inductiveformulaunevariable}
 \Lambda_{t,1}(z) =\frac 1 {(t+1)!} z^{t+1} -\sum_{\atop{0\le \tau\le t-2}{\tau\in 2\N }} \frac {1} {(t-\tau+1)!} \Lambda_{\tau,1}(z).
\end{equation} 
 Another consequence of  Theorem \ref{Theorem:Lidstone} is
\begin{equation}\label{inductiveformulaunevariableLambdat0}
\frac{z^{t}}{t!}= \Lambda_{t,0}(z)+
\sum_{\atop{0\le \tau\le t}{\tau \in 2\N }} 
\frac 1 {(t-\tau)!} \Lambda_{\tau,1}(z) 
\end{equation} 
for $t\in 2\N$.

  \section{Unicity for entire functions}\label{SS:Unicity}
 According to Theorem \ref{Theorem:Lidstone}, a polynomial is determined by the values of its derivatives of even order at the two points $0$ and $1$. H.~Poritsky \cite{Poritsky} and J.~M.~Whittaker \cite{Whittaker} proved that he same is true more generally for  an entire function of  exponential type $<\pi$:
 
\begin{theorem} 
[H.~Poritsky, J.~M.~Whittaker 1932] \label{TheoremPoristkyUnicite-n=1}
Let $f$ be an entire function of exponential type $<\pi$ satisfying $f^{(t)}(0)=f^{(t)}(1)=0$ for all sufficiently large $t \in 2\N$.
Then $f$ is a polynomial.
\end{theorem}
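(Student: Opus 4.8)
The plan is to reduce the statement to the rigidity computation already carried out in the second proof of Lemma~\ref{Lemma:UneVariable}. First I would remove the words ``sufficiently large'' by differentiating. Fix an even integer $2N$ beyond which the hypotheses hold and set $g=f^{(2N)}$. Differentiation does not increase the exponential type (by the formula $\limsup_n|f^{(n)}(z_0)|^{1/n}=\tau(f)$ recalled above), so $g$ is again entire of exponential type $<\pi$; and since $g^{(s)}=f^{(2N+s)}$, for every $s\in 2\N$ the order $2N+s$ is even and $\ge 2N$, whence $g^{(s)}(0)=g^{(s)}(1)=0$ for \emph{all} $s\in 2\N$. It therefore suffices to prove that such a $g$ vanishes identically: then $f^{(2N)}=0$ and $f$ is a polynomial of degree $\le 2N-1$.

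Next I would extract the symmetry exactly as in the second proof of Lemma~\ref{Lemma:UneVariable}. The vanishing of all even-order derivatives of $g$ at $0$ means that the Taylor expansion of $g$ at the origin contains only odd powers, i.e. $g(-z)=-g(z)$; applying the same remark to $z\mapsto g(1+z)$ gives $g(1-z)=-g(1+z)$. Combining these yields $g(z+2)=g(z)$, so $g$ is entire and periodic of period $2$. Writing $q=\rme^{\rmi\pi z}$, periodicity lets me factor $g$ through the covering $z\mapsto q$ of $\C^\ast$, producing a Laurent expansion in $q$, that is a Fourier expansion
\begin{equation*}
g(z)=\sum_{n\in\Z} c_n\,\rme^{\rmi\pi n z}.
\end{equation*}

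The crux is then to show that the bound $\tau(g)<\pi$ annihilates every mode except $n=0$; this estimate is the main obstacle and the only place where the strict inequality is used. For any fixed real $y$ I would recover the coefficients by integrating along the horizontal segment at height $y$,
\begin{equation*}
c_n=\frac12\int_0^2 g(x+\rmi y)\,\rme^{-\rmi\pi n(x+\rmi y)}\,\rmd x,
\end{equation*}
so that $|c_n|\le \rme^{\pi n y}\sup_{0\le x\le 2}|g(x+\rmi y)|$. Choosing $\varepsilon>0$ with $\tau(g)+\varepsilon<\pi$ and using $|g(z)|\le C_\varepsilon\rme^{(\tau(g)+\varepsilon)|z|}$ gives $|c_n|\le C'_\varepsilon\,\rme^{\pi n y+(\tau(g)+\varepsilon)|y|}$. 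Letting $y\to-\infty$ kills $c_n$ for $n\ge 1$, since the exponent behaves like $y\bigl(\pi n-\tau(g)-\varepsilon\bigr)$ with $\pi n\ge\pi>\tau(g)+\varepsilon$; symmetrically, letting $y\to+\infty$ kills $c_n$ for $n\le -1$. Hence only $c_0$ survives, so $g$ is constant, and the condition $g(0)=0$ forces $g\equiv 0$. This gives $f^{(2N)}=0$ and completes the proof.
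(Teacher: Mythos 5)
Your proof is correct, and its core rigidity argument is exactly the paper's: the vanishing of all even derivatives at $0$ and $1$ makes $g$ odd about both points, hence periodic of period $2$; one then factors $g$ through the covering $z\mapsto \rme^{\rmi\pi z}$ of $\C^\times$ and uses the type bound $<\pi$ to annihilate every nonzero Laurent/Fourier mode. Your explicit estimate on $c_n$ along horizontal segments (which are just the circles $|q|=\rme^{-\pi y}$ in the $q$-plane) is the fleshed-out version of the one-line appeal to ``Cauchy's inequalities for the coefficients of the Laurent expansion'' in the paper, and it is carried out correctly. Where you genuinely diverge is the preliminary reduction that removes the words ``sufficiently large'': the paper sets $\tilde f=f-P$, where $P$ is the Lidstone interpolation polynomial matching the finitely many nonzero data $f^{(t)}(0)$, $f^{(t)}(1)$, whose existence rests on Theorem~\ref{Theorem:Lidstone}; you instead pass to $g=f^{(2N)}$, using that differentiation preserves exponential type (via the Whittaker limsup formula). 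Your reduction makes the theorem logically independent of Theorem~\ref{Theorem:Lidstone}, so the proof is marginally more self-contained; the paper's reduction buys a sharper conclusion, namely that $f$ equals its own finite Lidstone expansion $f=P$, rather than merely being some polynomial of degree $\le 2N-1$.
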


\begin{proof}
We combine some arguments that we used for polynomials in the three proofs of Lemma  \ref{Lemma:UneVariable}. 
Let $\ftilde=f-P$, where $P$ is the polynomial satisfying 
$$
P^{(t)}(0)=f^{(t)}(0) \and P^{(t)}(1)=f^{(t)}(1) \for t \in 2\N.
$$
We have ${\ftilde}^{(t)}(0)={\ftilde}^{(t)}(1)=0$ for all $t \in 2\N$.
 
The functions $\ftilde(z)$ and $\ftilde(1-z)$ are odd, hence $\ftilde(z)$ is periodic of period $2$. Therefore there exists a function $g$, analytic in $\C^\times$, such that $\ftilde(z)=g(\rme^{\pi\rmi z})$. Since $\ftilde(z)$ has exponential type $<\pi$, using Cauchy's inequalities for the  coefficients of the Laurent expansion of $g$ at the origin, we deduce $\ftilde=0$ and $f=P$. 
\end{proof}

Theorem \ref{TheoremPoristkyUnicite-n=1} is best possible in the following two directions:
\\
$\bullet$ The entire function $\sin(\pi z)$ has exponential type $\pi$ and satisfies $f^{(t)}(0)=f^{(t)}(1)=0$ for all $t \in 2\N$.
\\
$\bullet$ If we 
assume only $f^{(t)}(0)=f^{(t)}(1)=0$ for all  even $t$  outside a finite set, then the conclusion is still valid -- this follows from 
Theorems \ref{Theorem:Lidstone} and \ref{TheoremPoristkyUnicite-n=1}. 
However, if we 
remove an infinite subset of conditions in the assumptions of  Theorem \ref{TheoremPoristkyUnicite-n=1}, then the conclusion is no more valid:
\begin{lemma}\label{Lemma:contrexemple-n=1}
 Let $\calE$ be an infinite subset of the set of $(t,i)\in 2\N\times \{0,1\}$. Then there exists a non countable set of transcendental entire functions $f$ of order $0$, with rational Taylor coefficients at the origin, such that $f^{(t)}(i)=0$ for all $(t,i)\in (2\N\times \{0,1\} )\setminus\calE$. 
\end{lemma}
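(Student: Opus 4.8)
The plan is to realize the required functions as infinite sums of rescaled \emph{polynomial} building blocks chosen with pairwise disjoint sets of monomials, so that rationality of the Taylor coefficients survives the summation. The good building blocks are the Lidstone polynomials themselves: for every $(s,j)\in\calE$ the polynomial $\Lambda_{s,j}$ has rational coefficients and satisfies $\Lambda_{s,j}^{(t)}(i)=\delta_{(s,j),(t,i)}$ for all $(t,i)\in 2\N\times\{0,1\}$, hence vanishes at \emph{every} prescribed position, i.e. $\Lambda_{s,j}^{(t)}(i)=0$ for all $(t,i)\in(2\N\times\{0,1\})\setminus\calE$ (such a pair is never equal to $(s,j)\in\calE$). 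Let $W\subseteq\Q[z]$ be the $\Q$-vector space of polynomials meeting all these vanishing conditions. Since $\calE$ is infinite it contains elements with arbitrarily large first coordinate, so the $\Lambda_{s,j}$ with $(s,j)\in\calE$ have unbounded degrees, are linearly independent, and $W$ is infinite dimensional over $\Q$.

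From $W$ I would next extract an infinite family of nonzero rational polynomials $P_1,P_2,\dots$ with pairwise disjoint monomial supports by a greedy dimension count. Assume $P_1,\dots,P_{n-1}$ are already chosen and let $U$ be the finite set of exponents occurring in them. Picking a $\Q$-subspace $V\subseteq W$ of dimension $|U|+1$, the $|U|$ linear conditions ``coefficient of $z^a$ is $0$'' ($a\in U$) admit a nonzero common solution $P_n\in V\subseteq W$, which can be taken with rational coefficients since all data are rational; by construction $\mathrm{supp}(P_n)\cap U=\emptyset$. This yields the $P_n$ with disjoint supports, and I then multiply each by a small rational $\rho_n$ so that $|[z^m]\rho_nP_n|\le 2^{-m^2}$ for every $m$, which is possible as each $P_n$ has finitely many terms.

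Finally, for each infinite subset $S\subseteq\N$ set $f_S=\sum_{n\in S}\rho_nP_n$. Disjointness of supports means that at each degree $m$ at most one summand contributes, so $[z^m]f_S$ is rational and $|[z^m]f_S|\le 2^{-m^2}$; this bound forces $f_S$ to be entire of order $0$, since $\varrho(f_S)=\limsup_m \frac{m\log m}{-\log|[z^m]f_S|}=0$. Linearity together with the absolute convergence furnished by the coefficient bound gives $f_S^{(t)}(i)=\sum_{n\in S}\rho_nP_n^{(t)}(i)=0$ for every $(t,i)\notin\calE$, so each $f_S$ satisfies all the imposed conditions. When $S$ is infinite, $f_S$ has nonzero coefficients at infinitely many degrees and is therefore transcendental, and for distinct infinite sets $S\neq S'$ one has $f_S\neq f_{S'}$, because a block present in one but not the other contributes a nonzero coefficient at a degree where the other function vanishes. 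Since $\N$ has uncountably many infinite subsets, this produces the desired uncountable family.

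The main obstacle is the simultaneous insistence on \emph{rational} Taylor coefficients and on \emph{uncountably} many functions: one cannot simply form real linear combinations of a fixed countable family, as a countable-dimensional $\Q$-vector space is countable while arbitrary real combinations destroy rationality. The disjoint-support device is exactly what reconciles these demands, turning the uncountable choice of a subset $S$ into rational coefficients and at the same time guaranteeing both transcendence and the injectivity of $S\mapsto f_S$. The only point needing care is the interchange $f_S^{(t)}(1)=\sum_{n\in S}\rho_nP_n^{(t)}(1)$, which is legitimate thanks to the absolute convergence coming from the $2^{-m^2}$ bound.
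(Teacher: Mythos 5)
Your proof is correct, but it takes a genuinely different route from the paper's. The paper also realizes the functions as infinite sums of Lidstone polynomials, but directly: it fixes a sequence $(P_m)_{m\ge 0}$ inside $\{\Lambda_{t,i}\;\mid\;(t,i)\in\calE\}$ with increasing degrees $d_m$, chooses rational constants $c_m$ with $|P_m|_r\le|c_m|r^{d_m}$ for $r\ge 1$, and sums $f=\sum_{m\ge 0}u_mP_m$ with $u_m=1/\bigl(c_m(d_m!)^2\bigr)$; locally uniform convergence makes $f$ entire of order $0$ with $f^{(t)}(i)=u_m$ when $P_m=\Lambda_{t,i}$ and $f^{(t)}(i)=0$ otherwise, so transcendence and uncountability (vary the rationals $c_m$) are read off the interpolation values. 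Your intermediate step of replacing the $\Lambda_{s,j}$ by combinations $P_n\in W$ with pairwise disjoint monomial supports is the real difference, and it buys something concrete: in the paper's construction each Taylor coefficient of $f$ at the origin is a priori an infinite sum $\sum_m u_m[z^k]P_m$ of rationals (infinitely many $\Lambda_{t,i}$ contribute to any fixed degree $k$), so the asserted rationality of the Taylor coefficients is not immediate there, whereas with disjoint supports it is automatic, as are the transcendence of $f_S$ and the injectivity of $S\mapsto f_S$. The cost is your greedy linear-algebra extraction, and your parametrization by infinite subsets of $\N$ replaces the paper's parametrization by sequences of rational constants; both yield uncountably many functions. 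One small repair to your write-up: the interchange $f_S^{(t)}(i)=\sum_{n\in S}\rho_nP_n^{(t)}(i)$ is best justified by observing that the coefficient bound forces the partial sums to converge uniformly on compact sets (the minima of the pairwise disjoint supports tend to infinity), so term-by-term differentiation is licensed by Weierstrass's theorem; absolute convergence of the coefficient series by itself is not quite the statement you need.
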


\begin{proof} 
Let $(P_m)_{m\ge 0}$ be an infinite sequence of polynomials belonging to the set $\{\Lambda_{t,i}\; \mid\; (t,i)\in\calE\}$. Let  $d_m$ be the degree of $P_m$. We assume the sequence $(d_m)_{m\ge 0}$ to be increasing. Let $(c_m)_{m\ge 0}$ be a sequence of rational numbers such that 
$$
|P_m|_r\le |c_m| r^{d_m}
$$
for all $r\ge 1$ and $m\ge 0$. For $m\ge 0$, set
$$
u_m=\frac 1 {c_m (d_m!)^2} \cdotp
$$
The series 
$$
\sum_{m\ge 0} u_mP_m(z)
$$
is uniformly convergent on any compact subset of $\C$, its sum $f(z)$ is an entire function of order $0$. From the uniform convergence of the series,  we deduce,  for all 
$t\in 2\N$ and $i\in\{0,1\}$,  
$$
f^{(t)}(i)=
\begin{cases}
u_m & \hbox {if $P_m=\Lambda_{t,i}$},
\\
0 & \hbox{ if $P_m\not=\Lambda_{t,i}$}.
\end{cases}
$$
The conclusion of Lemma \ref{Lemma:contrexemple-n=1} follows. 
\end{proof}

\section{Expansion of entire functions and generating series}\label{SS:expansion1}

Lidstone finite expansion for polynomials \eqref{Equation:LidstoneExpansionUneVariable} has been extended in  \cite{Poritsky,Whittaker} to an infinite expansion  for entire functions of exponential type $<\pi$ as follows: 
 
 \begin{theorem}
[H.~Poritsky,  J.~M.~Whittaker 1932]\label{Theorem:Poritsky} 
The expansion  \eqref{Equation:LidstoneExpansionUneVariable}  
holds for any entire function $f$ of exponential type $<\pi$, where, for each $z\in\C$, the series 
$$
\sum_{t  \in 2\N} f^{(t)}(0) \Lambda_{t,0}(z)
 \and 
\sum_{t  \in 2\N}f^{(t)}(1) \Lambda_{t,1}(z) 
$$
are absolutely convergent. 
 \end{theorem}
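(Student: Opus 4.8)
The plan is to set $F(z)$ equal to the right-hand side of \eqref{Equation:LidstoneExpansionUneVariable} and to prove, in turn, that the two series converge, that $F$ is entire of exponential type $<\pi$, and finally that $F=f$. Everything hinges on a growth estimate for the Lidstone polynomials, which I would extract from their generating series. Using the differential equation $\Lambda''_{t,1}=\Lambda_{t-2,1}$ and the initial data of Lemma \ref{Lemma:equadiffunevariable}, I first check that the function $G(w,z)=\sinh(wz)/\sinh(w)$, which satisfies $\partial_z^2 G=w^2G$, $G(w,0)=0$ and $G(w,1)=1$, has the expansion $G(w,z)=\sum_{k\ge 0}\Lambda_{2k,1}(z)\,w^{2k}$ near $w=0$: writing $G=\sum_k g_k(z)w^{2k}$ one gets $g_0(z)=z$, $g_k''=g_{k-1}$ and $g_k(0)=g_k(1)=0$ for $k\ge1$, so $g_k=\Lambda_{2k,1}$ by the unicity part of Lemma \ref{Lemma:equadiffunevariable}. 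For each fixed $z$ the singularities of $G(\cdot,z)$ lie at the poles of $1/\sinh$ in $\rmi\pi\Z\setminus\{0\}$, so the series has radius of convergence $\pi$.

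Next I would turn this into an explicit bound. Fix $\varepsilon'\in(0,\pi)$ and apply the Cauchy coefficient formula on the circle $|w|=\pi-\varepsilon'$, on which $\sinh w$ does not vanish; using $|\sinh(wz)|\le\cosh(\Re(wz))\le\rme^{(\pi-\varepsilon')|z|}$ this yields a bound of the shape
$$
|\Lambda_{2k,1}(z)|\le c_{\varepsilon'}\,\frac{\rme^{(\pi-\varepsilon')|z|}}{(\pi-\varepsilon')^{2k}}\qquad(k\ge0),
$$
with $c_{\varepsilon'}$ independent of $z$ and $k$; the same estimate holds for $\Lambda_{2k,0}(z)=\Lambda_{2k,1}(1-z)$ with $|z|$ replaced by $|z|+1$. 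On the other hand, the hypothesis $\tau(f)<\pi$ and the derivative estimate recalled in Section \ref{SS:Introduction1} give, for each $\varepsilon>0$, a constant $C$ with $|f^{(t)}(0)|,|f^{(t)}(1)|\le C(\tau(f)+\varepsilon)^{t}$. Choosing $\varepsilon,\varepsilon'$ so small that $\tau(f)+\varepsilon<\pi-\varepsilon'$ (possible since $\tau(f)<\pi$) makes each of the two series dominated termwise by a convergent geometric series, which gives the asserted absolute convergence for every $z$ and, the bounds being locally uniform in $z$, local uniform convergence.

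To finish I would identify $F$ with $f$ through the unicity theorem. Local uniform convergence makes $F$ entire, and the same estimates give $|F|_r\le C''\rme^{(\pi-\varepsilon')r}$, so $F$ has exponential type $\le\pi-\varepsilon'<\pi$. By Weierstrass's theorem I may differentiate the two series term by term; evaluating at $0$ and $1$ and using the defining relations $\Lambda^{(\tau)}_{t,i}(j)=\delta_{ij}\delta_{t\tau}$ (for $\tau\in2\N$) yields $F^{(\tau)}(0)=f^{(\tau)}(0)$ and $F^{(\tau)}(1)=f^{(\tau)}(1)$ for all $\tau\in2\N$. Hence $f-F$ is an entire function of exponential type $<\pi$ whose even-order derivatives all vanish at $0$ and $1$; by Theorem \ref{TheoremPoristkyUnicite-n=1} it is a polynomial, and then Lemma \ref{Lemma:UneVariable} forces $f-F=0$, which is \eqref{Equation:LidstoneExpansionUneVariable}.

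The hard part will be the growth estimate of the second paragraph: obtaining the sharp decay rate $(\pi-\varepsilon')^{-t}$ \emph{locally uniformly in} $z$, together with the exponential control in $|z|$ needed to bound $\tau(F)$. Once this is in place, absolute and uniform convergence, the exponential-type bound on $F$, and the reduction to Theorem \ref{TheoremPoristkyUnicite-n=1} and Lemma \ref{Lemma:UneVariable} are all formal. The one technical point to watch is the lower bound $\min_{|w|=\pi-\varepsilon'}|\sinh w|>0$ controlling the denominator on the contour, and the fact that $G(\cdot,z)$ is genuinely holomorphic across the removable singularity at $w=0$, so that the Cauchy coefficient formula is valid on the whole disk $|w|<\pi$.
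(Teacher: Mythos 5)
Your proof is correct, but it follows a genuinely different route from the paper's. You share the first step: both proofs identify the Taylor coefficients of $\sinh(\zeta z)/\sinh(\zeta)$ at $\zeta=0$ with the Lidstone polynomials via the differential system and the unicity part of Lemma \ref{Lemma:equadiffunevariable} (the paper calls this the ``direct proof'' of \eqref{Equation:SerieGeneratriceUneVariableM1}). After that the paths diverge. The paper uses Buck's kernel-expansion method: it writes $f(z)=\frac{1}{2\pi\rmi}\int_{|\zeta|=r}\rme^{\zeta z}F(\zeta)\,\rmd\zeta$ with $F$ the Laplace transform of $f$ and $\tau(f)<r<\pi$, substitutes the identity \eqref{Equation:ExpansionExpzetaz} for $\rme^{\zeta z}$, and interchanges sum and integral using uniform convergence of the power series in $\zeta$ on the circle $|\zeta|=r$; the resulting coefficients $\frac{1}{2\pi\rmi}\int_{|\zeta|=r}\zeta^tF(\zeta)\,\rmd\zeta$ and $\frac{1}{2\pi\rmi}\int_{|\zeta|=r}\zeta^t\rme^\zeta F(\zeta)\,\rmd\zeta$ are exactly $f^{(t)}(0)$ and $f^{(t)}(1)$, so the expansion and its absolute convergence come out in one stroke, with no appeal to the unicity theorem. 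You instead extract Cauchy coefficient bounds $|\Lambda_{t,1}(z)|\le c_{\varepsilon'}\rme^{(\pi-\varepsilon')|z|}(\pi-\varepsilon')^{-t}$ from the generating series, combine them with the derivative estimate $\limsup_t|f^{(t)}(z_0)|^{1/t}=\tau(f)$ to get locally uniform absolute convergence of the candidate sum $F$, bound $\tau(F)\le\pi-\varepsilon'$, and then identify $F$ with $f$ by Theorem \ref{TheoremPoristkyUnicite-n=1} and Lemma \ref{Lemma:UneVariable}. This is legitimate and not circular, since the paper proves Theorem \ref{TheoremPoristkyUnicite-n=1} beforehand by the periodicity argument, independently of Theorem \ref{Theorem:Poritsky}. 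What each approach buys: yours avoids the Laplace transform entirely, and your polynomial estimates are essentially weaker cousins of the bounds \eqref{Equation:MajorationLambdat1}--\eqref{Equation:BoundLambda} the paper derives later, so you get Whittaker's solvability of the interpolation problem almost for free; the paper's method needs no unicity input and scales up directly to Buck's expansion (Proposition \ref{Prop:Buck}) for functions of type $<(K+1)\pi$, where unicity genuinely fails and the $\sin(k\pi z)$ terms must be produced by the kernel itself. One small point of rigor on your side: Lemma \ref{Lemma:equadiffunevariable} is stated for sequences of \emph{polynomials}, so you should note (as the paper does) that the coefficients $g_k(z)$ are polynomials -- e.g.\ by induction from $g_k''=g_{k-1}$, $g_0=z$, since an entire solution of $g''=0$ is linear -- before invoking it; this is a one-line fix, not a gap.
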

 
 Notice that Theorem \ref{TheoremPoristkyUnicite-n=1} is a consequence of Theorem  \ref{Theorem:Poritsky}. 
  
 We will deduce from Theorem  \ref{Theorem:Poritsky} explicit formulae for the two following generating series:
 $$
 M_1(\zeta,z):=\sum_{t\in 2\N} \Lambda_{t,1}(z) \zeta^t
 \and
 M_0(\zeta,z):=\sum_{t\in 2\N} \Lambda_{t,0}(z) \zeta^t. 
 $$
 
 \begin{corollary}\label{Corollary:generatingseries}
For $|\zeta|<\pi$, we have 
 \begin{equation}\label{Equation:SerieGeneratriceUneVariableM1}
 M_1(\zeta,z) 
=\frac{\sinh(\zeta z)}{\sinh (\zeta )}
\end{equation} 
and 
\begin{equation}\label{Equation:SerieGeneratriceUneVariableM0}
M_0(\zeta,z)
=\cosh(\zeta z) - \sinh(\zeta z)\coth (\zeta ).
\end{equation}
 \end{corollary}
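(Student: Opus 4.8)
The plan is to feed well-chosen entire functions of exponential type $<\pi$ into the expansion of Theorem \ref{Theorem:Poritsky} and read off the two generating series. The natural test functions are $\rme^{\zeta z}$ and $\rme^{-\zeta z}$: for $|\zeta|<\pi$ each has exponential type $|\zeta|<\pi$, so the expansion \eqref{Equation:LidstoneExpansionUneVariable} applies to them, and moreover the two series on its right-hand side converge absolutely for every $z$. With $f^{(t)}(0)=\zeta^t$ the first of these series is exactly $M_0(\zeta,z)$ and the second a constant multiple of $M_1(\zeta,z)$, so this absolute convergence is precisely what guarantees that $M_0(\zeta,z)$ and $M_1(\zeta,z)$ are well defined on the disc $|\zeta|<\pi$.

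First I would compute the relevant derivatives. Writing $f(z)=\rme^{\zeta z}$ gives $f^{(t)}(z)=\zeta^t\rme^{\zeta z}$, hence $f^{(t)}(0)=\zeta^t$ and $f^{(t)}(1)=\zeta^t\rme^{\zeta}$. Substituting into \eqref{Equation:LidstoneExpansionUneVariable} yields
$$
\rme^{\zeta z}=M_0(\zeta,z)+\rme^{\zeta}M_1(\zeta,z).
$$
The crucial point here is that the sum runs over even $t$ only, so that for $g(z)=\rme^{-\zeta z}$ one still has $g^{(t)}(0)=(-\zeta)^t=\zeta^t$ and $g^{(t)}(1)=\zeta^t\rme^{-\zeta}$; the same substitution then gives
$$
\rme^{-\zeta z}=M_0(\zeta,z)+\rme^{-\zeta}M_1(\zeta,z).
$$

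It then remains to solve this $2\times2$ linear system in $M_0$ and $M_1$. Subtracting the two identities eliminates $M_0$ and produces $(\rme^{\zeta}-\rme^{-\zeta})M_1(\zeta,z)=\rme^{\zeta z}-\rme^{-\zeta z}$, that is $M_1(\zeta,z)=\sinh(\zeta z)/\sinh(\zeta)$, which is \eqref{Equation:SerieGeneratriceUneVariableM1}. To obtain $M_0$ I would not re-solve the system but instead invoke the involution $z\mapsto 1-z$, which gives $\Lambda_{t,0}(z)=\Lambda_{t,1}(1-z)$ and hence $M_0(\zeta,z)=M_1(\zeta,1-z)=\sinh(\zeta(1-z))/\sinh(\zeta)$. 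Expanding $\sinh(\zeta-\zeta z)=\sinh(\zeta)\cosh(\zeta z)-\cosh(\zeta)\sinh(\zeta z)$ turns this into $\cosh(\zeta z)-\sinh(\zeta z)\coth(\zeta)$, which is \eqref{Equation:SerieGeneratriceUneVariableM0}.

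No step is genuinely hard once the test functions are chosen. The only points requiring care are checking that $\rme^{\pm\zeta z}$ has exponential type $<\pi$ (so that Theorem \ref{Theorem:Poritsky} applies and both series converge absolutely), and exploiting the parity of $t$ to make the $\rme^{-\zeta z}$ computation produce a second equation with the same $M_0$ term; everything else is the elementary algebra of hyperbolic functions.
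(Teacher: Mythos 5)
Your proposal is correct and is essentially the paper's own proof of the corollary: you apply the expansion of Theorem \ref{Theorem:Poritsky} to $\rme^{\zeta z}$, use the evenness of $t$ to get the companion identity for $\rme^{-\zeta z}$ (the paper phrases this as ``replacing $\zeta$ with $-\zeta$''), subtract to isolate $M_1$, and recover $M_0$ from the involution $z\mapsto 1-z$ together with the addition formula for $\sinh$. The only thing worth adding is that the paper, because its later proof of Theorem \ref{Theorem:Poritsky} itself rests on \eqref{Equation:SerieGeneratriceUneVariableM1}, also supplies an independent direct proof of that formula to avoid circularity; as a proof of the corollary as stated, your argument is exactly what the paper does.
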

 
Since $\Lambda_{t,0}(z)=\Lambda_{t,1}(1-z)$, we have $M_0(\zeta,z)=M_1(\zeta,1-z)$ and the trigonometric relation
$$
 \sinh(z_1-z_2)=\sinh(z_1)\cosh(z_2)-\cosh(z_1)\sinh(z_2)
$$
shows that the two formulae   \eqref{Equation:SerieGeneratriceUneVariableM1}  and  \eqref{Equation:SerieGeneratriceUneVariableM0}  are equivalent.

 \begin{proof}[Proof of \eqref{Equation:SerieGeneratriceUneVariableM1} as a consequence of Theorem  \ref{Theorem:Poritsky}]
 Let $\zeta\in\C$ satisfy $|\zeta|<\pi$.
We use Theorem  \ref{Theorem:Poritsky}  and formula \eqref{Equation:LidstoneExpansionUneVariable} 
 for the function $f_\zeta(z)=\rme^{\zeta z}$. 
Since $f^{(t)}_\zeta (0)= \zeta^t$ and $f^{(t)}_\zeta (1)= \rme^\zeta \zeta^t$,  we deduce
\begin{equation}\label{Equation:ExpansionExpzetaz}
\rme^{\zeta z}=
\sum_{ t  \in 2\N}
 \Lambda_{t,0}(z)\zeta^t+
\rme^\zeta 
\sum_{ t  \in 2\N}\Lambda_{t,1}(z)\zeta^t.
\end{equation} 
Replacing $\zeta $ with $-\zeta $ yields 
$$
\rme^{-\zeta z}=
\sum_{ t \in 2\N}
  \Lambda_{t,0}(z)\zeta^t+
\rme^{-\zeta } 
\sum_{ t \in 2\N}
  \Lambda_{t,1}(z)\zeta^t.
$$ 
Hence
$$
\rme^{\zeta z}-\rme^{-\zeta z}=(\rme^\zeta -\rme^{-\zeta })
\sum_{ t  \in 2\N}
 \Lambda_{t,1}(z) \zeta^t.
$$  
 This proves \eqref{Equation:SerieGeneratriceUneVariableM1}. 
\end{proof}

From \eqref{Equation:SerieGeneratriceUneVariableM1} one readily deduces the following relation \cite[Equation(3.5)]{Whittaker} for $t\in2\N$,
$$
\Lambda_{t,1}(z)=\frac{2^{t+1}}{(t+1)!} B_{t+1}\left(\frac {1+z} 2\right),
$$
between the Lidstone polynomials and the Bernoulli polynomials; the latter are defined by 
$$
t\frac{\rme^{tz}-1}{\rme^t -1}=
\sum_{n=1}^\infty B_n(z)\frac {t^n}{n!}\cdotp
$$
From Corollary  \ref{Corollary:generatingseries}  we deduce $ M_1(\zeta,z+1)-M_1(\zeta,z-1)= 2\cosh(\zeta z)$, which means 
$$
 \Lambda_{t,1}(z+1)- \Lambda_{t,1}(z-1)= 2\frac {z^t}{t!}\cdotp
 $$ 
This relation also follows from the functional equation 
$$
B_n(z+1)- B_n(z)= nz^{n-1}
$$ 
of the Bernoulli polynomials.

Our proof of Theorem  \ref{Theorem:Poritsky}  below will rest on \eqref{Equation:SerieGeneratriceUneVariableM1}, hence we need to give a direct proof of it. 
 \begin{proof}[Direct proof of  \eqref{Equation:SerieGeneratriceUneVariableM1}]
 We start with the formula
 \begin{equation}\label{Equation:expzetaz}
\rme^{\zeta z}=\frac{\sinh(\zeta (1-z) )}{\sinh (\zeta )}+\rme^\zeta  \ \frac{\sinh(\zeta z)}{\sinh (\zeta )},
\end{equation}
which holds for $\zeta \in\C$, $\zeta \not\in \pi\rmi \Z$ and $z\in\C$. For $\zeta \in\C$, $\zeta \not\in \pi\rmi \Z$,  the entire function 
$$
f(z)=\frac{\sinh(\zeta z)}{\sinh (\zeta )}=\frac{\rme^{\zeta z}-\rme^{-\zeta z}}{\rme^\zeta -\rme^{-\zeta }}
$$
satisfies 
$$
f''=\zeta ^2f,\quad f(0)=0,\quad f(1)=1,
$$
hence $f^{(t)}(0)=0$ and $f^{(t)}(1)=\zeta ^t$ for all $t \in 2\N$.

  For $z\in\C$ and $|\zeta |<\pi$, let 
$$
F(\zeta,z)=
\frac{\sinh(\zeta z)}{\sinh (\zeta )} 
$$
with $F(0,z)=z$.  
Fix $z\in\C$. The map $\zeta \mapsto F(\zeta,z )$ is analytic in the disc $|\zeta |<\pi$ and is  even: $F(-\zeta,z )=F(\zeta,z )$. Consider its Taylor expansion at the origin: 
$$
F(\zeta,z )=
\sum_{ t  \in 2\N}
c_t(z) \zeta^t
$$
with $c_0(z)=z$.  For fixed $z\in\C$, this Taylor series is absolutely and uniformly convergent on any compact subset of the disc $|\zeta |<\pi$.
We have $F(\zeta,0 )=0$,  $F(\zeta,1 )=1$, and
 $$
F(\zeta,z )=\frac{\rme^{\zeta z}-\rme^{-\zeta z}}{\rme^\zeta -\rme^{-\zeta }}\cdotp
$$
From 
$$
c_t(z)=
\frac{1}{t!} 
\left(\frac{\partial}{\partial \zeta}\right)^{t}F(0,z)
$$
it follows that $c_t(z)$ is a polynomial.  
From 
$$
\left(\frac{\partial}{\partial z}\right)^2F(\zeta,z )=\zeta ^2F(\zeta,z )
$$
we deduce 
$$
c''_t=c_{t-2} 
\for
 t\in 2\N, \;  t\ge 2.
 $$
 Since $c_t(0)=c_t(1)=0$ for $t\in 2\N$, $t\ge 2$, we deduce from Lemma \ref{Lemma:equadiffunevariable} that $c_t(z)=\Lambda_{t,1}(z)$.

This completes the proof of  
  \eqref{Equation:SerieGeneratriceUneVariableM1}, hence the proof of  \eqref{Equation:ExpansionExpzetaz} for all $\zeta\in\C$ with $|\zeta|<\pi$.
   \end{proof}
   
We are going to prove Theorem \ref{Theorem:Poritsky} by means of the Laplace transform, which is a special case of the method of kernel expansion of R.C. Buck   \cite{Buck}; see also   \cite[Chap.I \S 3]{BoasBuck}.
 Let 
$$
f(z)=\sum_{k\ge 0} \frac{a_k}{k!} z^k
$$ 
be an entire function of exponential type $\tau(f)$. 
The \emph{Laplace transform} of $f$, viz.
\begin{equation}\label{Equation:Laplace}
F(\zeta )=\sum_{k\ge 0} a_k\zeta ^{-k-1}, 
\end{equation}
is analytic in the domain $\{\zeta\in\C\; \mid \; |\zeta |>\tau(f)\}$. 
From Cauchy's residue Theorem we deduce, for $r>0$,
\begin{equation}\label{Equation:Cauchy}
\frac{1}{2\pi\rmi} \int_{|\zeta|=r} \rme^{\zeta  z}\zeta^{-k-1}\rmd \zeta =\frac {z^k}{k!}\cdotp
\end{equation}
From the absolute and uniform convergence of the series in the right hand side of  \eqref{Equation:Laplace} on $|\zeta|=r$, it follows that for $r>\tau(f)$ we have
$$
f(z) =\frac{1}{2\pi\rmi} \int_{|\zeta|=r} \rme^{\zeta  z}F(\zeta ) \rmd \zeta 
$$  
and
$$
f^{(t)}(z) =\frac{1}{2\pi\rmi} \int_{|\zeta |=r} \zeta^t\rme^{\zeta  z}F(\zeta ) \rmd \zeta .
$$
   
  \begin{proof}[Proof of Theorem \ref{Theorem:Poritsky}]
Let $f$ be an entire function of exponential type $\tau(f)$ satisfying  $\tau(f)<\pi$. Let $r$ satisfy $ \tau(f)<r<\pi$. 
From the uniform convergence of the series  \eqref{Equation:ExpansionExpzetaz} on the compact set $\{\zeta\in\C\; \mid \; |\zeta|=r\}$, we deduce
$$
\begin{aligned}
f(z) = \sum_{t  \in 2\N}
\left( \frac{1}{2\pi\rmi} \int_{|\zeta |=r} \zeta^t F(\zeta ) \rmd \zeta \right)&
\Lambda_{t,0}(z)+
\\
&
\sum_{t \in 2\N}
\left( \frac{1}{2\pi\rmi} \int_{|\zeta |=r} \zeta^t \rme^\zeta  F(\zeta ) \rmd \zeta \right)\Lambda_{t,1}(z),
\end{aligned}
$$
and therefore (formula of Poritsky and Whittaker \eqref{Equation:LidstoneExpansionUneVariable} for entire functions of exponential type $<\pi$) 
$$
f(z) = 
\sum_{t \in 2\N}
 f^{(t)}(0) \Lambda_{t,0}(z)+
 \sum_{t \in 2\N} f^{(t)}(1) \Lambda_{t,1}(z),
$$
where the two series are absolutely convergent.  

This completes the proof of Theorem \ref{Theorem:Poritsky}.
\end{proof}
  
 \section{Integral formulae for Lidstone polynomials}\label{SS:IntegralFormulaLidstone1}
 
 Using Cauchy's residue Theorem, we deduce  from \eqref{Equation:SerieGeneratriceUneVariableM1} the following integral formula \cite[(4.1)]{Whittaker}:
 
 \begin{proposition}\label{Prop:IntegralFormulaLidstone}
 For $z\in\C$, $t \in 2\N$ and   $K\ge 0$, we have
$$ 
\begin{aligned}
\Lambda_{t,1}(z)
=(-1)^{t/2}\frac{2}{\pi^{t+1} } 
&
\sum_{k=1}^{K} \frac{(-1)^{k+1}}{k^{t+1}} 
\sin \bigl( k\pi z\bigr)
\\ 
&
\qquad
+\frac{1}{2\pi\rmi} \int_{|\zeta|=(2K+1)\pi/2} \zeta^{-t-1}\frac
{\sinh(\zeta z)}{\sinh(\zeta)}
\rmd \zeta.
\end{aligned} 
$$
\end{proposition}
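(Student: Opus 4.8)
The plan is to read off $\Lambda_{t,1}(z)$ as the residue at $\zeta=0$ of the meromorphic function $g(\zeta)=\zeta^{-t-1}\sinh(\zeta z)/\sinh(\zeta)$, and then to evaluate that residue by comparing it, through the residue theorem, with the integral over the large circle $|\zeta|=(2K+1)\pi/2$, collecting the contributions of the poles lying in between. Everything rests on the already-established generating-series identity \eqref{Equation:SerieGeneratriceUneVariableM1}.

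First I would recall that $\sinh(\zeta z)/\sinh(\zeta)$, understood with value $z$ at $\zeta=0$, is analytic and even in the disc $|\zeta|<\pi$ with Taylor expansion $\sum_{s\in 2\N}\Lambda_{s,1}(z)\zeta^s$ by \eqref{Equation:SerieGeneratriceUneVariableM1}. Multiplying by $\zeta^{-t-1}$ and extracting the coefficient of $\zeta^{-1}$ shows that $\Lambda_{t,1}(z)=\mathrm{Res}_{\zeta=0}\,g(\zeta)$; equivalently this residue equals $\frac{1}{2\pi\rmi}\int_{|\zeta|=\rho}g(\zeta)\,\rmd\zeta$ for any small $\rho$ with $0<\rho<\pi$.

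Next I would locate the remaining poles of $g$. The zeros of $\sinh$ sit at $\zeta=\rmi n\pi$ ($n\in\Z$) and are all simple, since $\sinh'=\cosh$ and $\cosh(\rmi n\pi)=(-1)^n\neq0$. Hence inside $|\zeta|=R$ with $R=(2K+1)\pi/2=K\pi+\pi/2$ the only poles of $g$ are $\zeta=0$ and the simple poles $\zeta=\pm\rmi n\pi$ for $1\le n\le K$; the choice of radius makes the circle pass strictly between $\pm\rmi K\pi$ and $\pm\rmi(K+1)\pi$, so it meets no pole. The residue theorem then gives
$$
\frac{1}{2\pi\rmi}\int_{|\zeta|=R}g(\zeta)\,\rmd\zeta
=\Lambda_{t,1}(z)+\sum_{n=1}^{K}\bigl(\mathrm{Res}_{\rmi n\pi}g+\mathrm{Res}_{-\rmi n\pi}g\bigr).
$$

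The last step is a residue computation at $\zeta=\pm\rmi n\pi$ via $\mathrm{Res}=p(\zeta_0)/q'(\zeta_0)$ with $q=\sinh$, $q'=\cosh$. The relevant identities are $\cosh(\pm\rmi n\pi)=(-1)^n$, $\sinh(\pm\rmi n\pi z)=\pm\rmi\sin(n\pi z)$, and the evaluation of $(\pm\rmi n\pi)^{-t-1}$; because $t$ is even one has $\rmi^{-t-1}=(-1)^{t/2}(-\rmi)$, and the two symmetric contributions turn out equal, each equal to $(-1)^{t/2}(-1)^n(n\pi)^{-t-1}\sin(n\pi z)$. Solving the displayed identity for $\Lambda_{t,1}(z)$, then writing $-(-1)^n=(-1)^{n+1}$ and $(n\pi)^{-t-1}=\pi^{-t-1}n^{-t-1}$, yields exactly the asserted formula. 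The only delicate point is the bookkeeping of the powers of $\rmi$ and the signs in the two conjugate residues; the rest is a direct application of the residue theorem together with \eqref{Equation:SerieGeneratriceUneVariableM1}.
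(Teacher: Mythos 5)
Your proof is correct and is essentially the same as the paper's: both apply the residue theorem to $\zeta^{-t-1}\sinh(\zeta z)/\sinh(\zeta)$ on the circle $|\zeta|=(2K+1)\pi/2$, identify the residue at $\zeta=0$ as $\Lambda_{t,1}(z)$ via the generating series \eqref{Equation:SerieGeneratriceUneVariableM1}, and evaluate the residues at the simple poles $\pm k\pi\rmi$, $1\le k\le K$. The only cosmetic difference is that the paper deduces the equality of the residues at $k\pi\rmi$ and $-k\pi\rmi$ from the oddness of the integrand (since $t$ is even), whereas you verify both by direct computation.
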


\begin{proof}
Let $z\in\C$. Inside the disc 
$\{\zeta\in\C\; \mid \; |\zeta|\le (2K+1)\pi/2\}$, the function $\zeta\mapsto \zeta^{-t-1}\frac
{\sinh(\zeta z)}{\sinh(\zeta)}
$ 
has a pole of order $t+1$ at $\zeta=0$ and only simple poles at $\zeta=k\pi\rmi$ with $k\in\Z$, $0<|k|\le K$.
The residue at $0$ is $\Lambda_{t,1}(z)$, while for  $k\in\Z\setminus\{0\}$, the residue at $k\pi\rmi$ is
\begin{equation}\label{equation:residu}
(-1)^k 
\rmi^{-t} (k\pi)^{-t-1} 
\sin \bigl( k\pi z\bigr).
\end{equation}
Since $t$ is even, the function is odd and the residues at $k\pi\rmi$ and at $-k\pi\rmi$ are the same. 
\end{proof}

In particular, with $K=1$ we have \cite[(4.3)]{Whittaker}
$$
\Lambda_{t,1}(z)=(-1)^{t/2}\frac{2}{\pi^{t+1} } \sin (\pi z)
+
\frac{1}{2\pi\rmi} \int_{|\zeta|=3\pi/2} \zeta^{-t-1}\frac
{\sinh(\zeta z)}{\sinh(\zeta)}
\rmd \zeta.
$$ 
Since $|\sinh(\zeta)|\ge 1$ for $|\zeta|=3\pi/2$, 
one deduces,
for $t\ge 0$ and $r>0$,  
\begin{equation}
\label{Equation:MajorationLambdat1}
\left\lbrace
\begin{aligned}
\left|\Lambda_{t,1}(z) - (-1)^{t/2} \frac 2 {\pi^{t+1}}\sin (\pi z) \right| 
&\le  \left(\frac 2 {3\pi} \right)^t 
\rme^{3\pi r/2},
\\
\left|\Lambda_{t,0}(z) - (-1)^{t/2} \frac 2 {\pi^{t+1}}\sin (\pi z) \right| 
&\le \rme^{3\pi/2} \left(\frac 2 {3\pi} \right)^t 
\rme^{3\pi r/2}.
\end{aligned}
\right.
\end{equation} 
These estimates 
enable Whittaker \cite[Theorem 1]{Whittaker} to solve the Lidstone interpolation problem as follows.
 Let $(a_t)_{t \in 2\N}$ and $(b_t)_{  t \in 2\N}$ be two sequences of complex numbers. If the series
$$
\sum_{t\in2\N}(-1)^{t/2}\frac{a_t}{\pi^t}
\and
\sum_{t\in2\N}(-1)^{t/2}\frac{b_t}{\pi^t}
$$ 
are convergent, then  
\begin{equation}\label{Equation:LidstoneInterpolationSolution}
\sum_{t\in2\N} a_t \Lambda_{t,0}(z) + \sum_{t\in2\N} b_t \Lambda_{t,1}(z)
\end{equation}
is uniformly convergent on any compact of $\C$ and its sum $f(z)$
is an  
entire function  
satisfying 
$$
 f^{(t)}(0)=a_t
\and
 f^{(t)}(1)=b_t
 \hbox{ for all $  t \in 2\N$}.
 $$
  If one of  the series
$$
\sum_{t\in2\N}(-1)^{t/2}\frac{a_t}{\pi^t},
\quad
\sum_{t\in2\N}(-1)^{t/2}\frac{b_t}{\pi^t}
$$ 
is not convergent, then \eqref{Equation:LidstoneInterpolationSolution} cannot converge for any non integral value of $z$. 

Another consequence of \eqref{Equation:MajorationLambdat1} is, for $t\in 2\N$ and $r\ge 0$,
\begin{equation}\label{Equation:BoundLambda}
|\Lambda_{t,1}|_r
\le 2 \pi^{-t} \rme^{3\pi r/2}
\and
|\Lambda_{t,0}|_r
\le 2  \rme^{3\pi /2}\pi^{-t} \rme^{3\pi r/2}.
\end{equation}
 
 We now prove another integral formula for the polynomials $\Lambda_{t,0}$.

\begin{proposition}
\label{Prop:IntegralFormula2}
For $t\in 2\N$   and for $K\ge 0$, we have
$$ 
\begin{aligned}
\Lambda_{t,0}(z)=\frac{z^t}{t!}+&(-1)^{t/2}\frac{2}{\pi^{t+1} } 
\sum_{k=1}^{K} \frac{1}{k^{t+1}} 
\sin \bigl( k\pi z\bigr)
\\ 
&
\qquad
-\frac{1}{2\pi\rmi} \int_{|\zeta|=(2K+1)\pi/2} \zeta^{-t-1} 
\sinh(\zeta z) \coth(\zeta)
\rmd \zeta.
\end{aligned} 
$$ 
\end{proposition}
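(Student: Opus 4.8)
The plan is to follow the proof of Proposition \ref{Prop:IntegralFormulaLidstone} verbatim in spirit, applying Cauchy's residue Theorem, but now to the kernel attached to the generating series $M_0$ of Corollary \ref{Corollary:generatingseries} rather than to $M_1$. Fix $z\in\C$ and $t\in 2\N$, and consider the meromorphic function
$$
h(\zeta)=\zeta^{-t-1}\sinh(\zeta z)\coth(\zeta)
$$
inside the disc $\{\zeta\in\C\;\mid\;|\zeta|\le (2K+1)\pi/2\}$. Since $\sinh(\zeta z)\coth(\zeta)$ is even in $\zeta$ and its two factors have cancelling behaviour at the origin—the simple pole of $\coth$ being killed by the simple zero of $\sinh(\zeta z)$—the function $h$ has a pole of order $t+1$ at $\zeta=0$ and only simple poles at $\zeta=k\pi\rmi$ with $k\in\Z$, $0<|k|\le K$. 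These are exactly the poles enclosed by the contour $|\zeta|=(2K+1)\pi/2$, and $\coth$ has residue $1$ at each $k\pi\rmi$.

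The contribution of the simple poles is routine and parallels the earlier computation. Using $\sinh(k\pi\rmi z)=\rmi\sin(k\pi z)$ together with $(k\pi\rmi)^{t+1}=(k\pi)^{t+1}\rmi^{t+1}$ and $\rmi^{-t}=(-1)^{t/2}$ (valid since $t$ is even), the residue of $h$ at $\zeta=k\pi\rmi$ equals $(-1)^{t/2}(k\pi)^{-t-1}\sin(k\pi z)$. As in Proposition \ref{Prop:IntegralFormulaLidstone} this expression is invariant under $k\mapsto -k$, so pairing $\pm k$ produces a factor $2$ and the total contribution of the nonzero poles is $(-1)^{t/2}\frac{2}{\pi^{t+1}}\sum_{k=1}^{K}k^{-t-1}\sin(k\pi z)$.

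The genuinely new ingredient, and the step I expect to require the most care, is the residue at $\zeta=0$. Rather than expand $h$ by hand, I would invoke the defining identity of Corollary \ref{Corollary:generatingseries} rewritten as $\sinh(\zeta z)\coth(\zeta)=\cosh(\zeta z)-M_0(\zeta,z)$. The residue of $h$ at $0$ is the coefficient of $\zeta^{t}$ in $\sinh(\zeta z)\coth(\zeta)$; the coefficient of $\zeta^{t}$ in $\cosh(\zeta z)=\sum_{n\ge 0}(\zeta z)^{2n}/(2n)!$ is $z^{t}/t!$ because $t$ is even, while the coefficient of $\zeta^{t}$ in $M_0(\zeta,z)=\sum_{\tau\in 2\N}\Lambda_{\tau,0}(z)\zeta^{\tau}$ is $\Lambda_{t,0}(z)$. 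Hence the residue at $0$ equals $z^{t}/t!-\Lambda_{t,0}(z)$. The only point demanding vigilance here is the sign in this identity, so that the $\cosh$ term yields the monomial $z^{t}/t!$ and the $M_0$ term yields $\Lambda_{t,0}(z)$ with the correct sign.

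Assembling the three residues through the residue Theorem gives
$$
\frac{1}{2\pi\rmi}\int_{|\zeta|=(2K+1)\pi/2}h(\zeta)\,\rmd \zeta
=\frac{z^{t}}{t!}-\Lambda_{t,0}(z)+(-1)^{t/2}\frac{2}{\pi^{t+1}}\sum_{k=1}^{K}\frac{\sin(k\pi z)}{k^{t+1}},
$$
and solving this linear relation for $\Lambda_{t,0}(z)$ produces precisely the announced formula.
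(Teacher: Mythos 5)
Your proof is correct and follows essentially the same route as the paper's own argument: the paper likewise applies the residue theorem to $\zeta^{-t-1}\sinh(\zeta z)\coth(\zeta)$ on the circle $|\zeta|=(2K+1)\pi/2$, reads the residue at $\zeta=0$ as $\frac{z^t}{t!}-\Lambda_{t,0}(z)$ off the generating series \eqref{Equation:SerieGeneratriceUneVariableM0}, and pairs the simple poles at $\pm k\pi\rmi$ using parity in exactly the way you do. The only difference is cosmetic: the paper also remarks that the statement is equivalent to Proposition \ref{Prop:IntegralFormulaLidstone} under the substitution $z\mapsto 1-z$, before giving the direct residue computation you reproduced.
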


\begin{proof}
Proposition \ref{Prop:IntegralFormula2} is equivalent to Proposition \ref{Prop:IntegralFormulaLidstone} by changing the variable $z$ to $1-z$. We give another proof by repeating the same arguments as for the proof of Proposition \ref{Prop:IntegralFormulaLidstone}.
Inside the disc 
$\{\zeta\in\C\; \mid \; |\zeta|\le (2K+1)\pi/2\}$,  the function 
$
\zeta\mapsto
\zeta^{-t-1} 
\sinh(\zeta z) \coth(\zeta)$
has only simple poles at $k\pi\rmi$ with $k\in\Z$, $|k|\le K$.
From
\eqref{Equation:SerieGeneratriceUneVariableM0},
it follows that the residue at $\zeta=0$ is 
$$
\frac{z^t}{t!} -\Lambda_{t,0}(z),
$$ 
while for  $k\in\Z\setminus\{0\}$, the residue at $k\pi\rmi$ is
\begin{equation}\label{equation:residupsi}
\rmi^{-t} (k\pi)^{-t-1} 
\sin \bigl( k\pi z\bigr).
\end{equation}
Since $t$ is even, the function we integrate is odd and the residues at $k\pi\rmi$ and at $-k\pi\rmi$ are the same. 
 \end{proof}

 \section{Functions of finite exponential type}\label{SS:finiteExponentialType-n=1}
 
 We follow \cite{Buck}. Let $K\ge 1$. The function $\zeta\mapsto  \frac {\sinh (\zeta z)}{\sinh(\zeta)}$ is even and has only  simple poles at $k\pi\rmi$ with $k\in\Z\setminus\{0\}$,  
with residue given by \eqref{equation:residu} with $t=-1$, namely  $(-1)^k \rmi \sin(k\pi z)$. The sum of the residues at  $k$ and $-k$ is $0$ and we have\footnote{A factor $2$ is missing in \cite[p.795]{Buck} and \cite[Chap.I \S 4 p.15]{BoasBuck}.}
 $$
 \frac{(-1)^k \rmi \sin(k\pi z)}{\zeta-k\pi \rmi} -  \frac{(-1)^k \rmi \sin(k\pi z)}{\zeta+k\pi \rmi}
 =2\pi(-1)^{k+1}\frac {k\sin(k\pi z)}{\zeta^2+k^2\pi^2}\cdotp
 $$ 
 Hence the function $ G_K(\zeta,z)$ defined by 
 \begin{equation}\label{Equation:GK}
 \frac {\sinh (\zeta z)}{\sinh(\zeta)}= 
 2\pi\sum_{k=1}^K \frac{(-1)^{k+1}k\sin(k\pi z)}{\zeta^2+k^2\pi^2}
 +G_K(\zeta,z)
\end{equation}
 is analytic in the domain  $\bigl\{(\zeta,z)\in\C^2\; \mid \; |\zeta|<(K+1)\pi\bigr\}$.  Notice that for $|\zeta|<k\pi$ and $k\ge 1$ we have
 $$
 \frac 1{\zeta^2+k^2\pi^2}=\sum_{t\in 2\N} \frac{(\rmi\zeta)^t}{(k\pi)^{t+2}}\cdotp
 $$
 The function $z\mapsto G_K(\zeta,z)$ is odd. 
Since the function $\zeta\mapsto G_K(\zeta,z)$ is even, its Taylor expansion at the origin 
can be written
  $$
 G_K(\zeta,z)=\sum_{t \in 2\N}g_t(z) \zeta^t
 $$
 where the functions $g_t(z)$ are odd entire functions. 
This Taylor series is absolutely and uniformly convergent for $\zeta$ in any compact subset of the disc $\{\zeta\in\C\; \mid \; |\zeta|<(K+1)\pi\}$. The Taylor coefficient  $g_t(z) $ is the sum of $\Lambda_{t,1}(z)$ and a finite  trigonometric sum of exponential type $\le K \pi$, namely
$$
g_t(z)=\Lambda_{t,1}(z) 
+2(-1)^{t/2}\sum_{k=1}^K  (-1)^k (k\pi)^{-t-1}\sin(k\pi z). 
$$
 Using \eqref{Equation:expzetaz} we deduce, for $|\zeta|<(K+1)\pi$,  
\begin{equation}\label{Equation:expzetazAvecgt}
 \rme^{\zeta z}=\sum_{t \in 2\N}g_t(1-z) \zeta^t  + \rme^\zeta \sum_{ t\in 2\N}g_t(z) \zeta^t 
 +
2\pi\sum_{k=1}^K \frac{ k\sin(k\pi z)}{\zeta^2+k^2\pi^2}\left(1+(-1)^{k+1} \rme^\zeta\right).
\end{equation} 
 
 \begin{proposition}[R.C.~Buck, 1955]\label{Prop:Buck}
Let $K$ be a positive integer.  Let $f$ be an entire function of finite exponential type $\tau(f)<(K+1)\pi$ and let $F(\zeta)$ be the Laplace transform of $f$.  Then for $z\in\C$ we have 
$$
f(z)=\sum_{t \in 2\N}f^{(t)}(0) g_t(1-z)+\sum_{t \in 2\N } f^{(t)}(1)g_t(z)+
\sum_{k=1}^K C_k \sin(k\pi z),
$$
where the series are absolutely convergent and 
\begin{equation}\label{Equation:Ck}
C_k=
{-k\rmi}  \int_{|\zeta|=r}
\frac{1+(-1)^{k+1} \rme^\zeta}
{\zeta^2+k^2\pi^2}
F(\zeta ) \rmd \zeta \quad (1\le k\le K)
\end{equation}
 for any $r$ in the range $\tau(f)<r<(K+1)\pi$.
\end{proposition}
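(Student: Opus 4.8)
The plan is to feed the expansion \eqref{Equation:expzetazAvecgt} of $\rme^{\zeta z}$ into the Laplace representation of $f$ established at the end of Section~\ref{SS:expansion1}. Fix $z\in\C$ and, using the hypothesis $\tau(f)<(K+1)\pi$, choose $r$ with $\tau(f)<r<(K+1)\pi$. Then
$$
f(z)=\frac{1}{2\pi\rmi}\int_{|\zeta|=r}\rme^{\zeta z}F(\zeta)\,\rmd\zeta,
$$
and I would substitute \eqref{Equation:expzetazAvecgt} for $\rme^{\zeta z}$ under the integral sign. To justify integrating term by term, I would observe that the circle $|\zeta|=r$ is a compact subset of the disc $|\zeta|<(K+1)\pi$ on which, as recorded just before the proposition, the Taylor series $\sum_{t\in 2\N}g_t(1-z)\zeta^t$ and $\sum_{t\in 2\N}g_t(z)\zeta^t$ converge absolutely and uniformly; the remaining term of \eqref{Equation:expzetazAvecgt} is a finite sum over $k$. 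Since $F$ is continuous on $|\zeta|=r$, the interchange of summation and integration is legitimate.

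Next I would identify the three resulting pieces. Using the formulae $f^{(t)}(0)=\frac{1}{2\pi\rmi}\int_{|\zeta|=r}\zeta^t F(\zeta)\,\rmd\zeta$ and $f^{(t)}(1)=\frac{1}{2\pi\rmi}\int_{|\zeta|=r}\zeta^t\rme^{\zeta}F(\zeta)\,\rmd\zeta$ from Section~\ref{SS:expansion1}, the first two sums turn into $\sum_{t\in 2\N}f^{(t)}(0)g_t(1-z)$ and $\sum_{t\in 2\N}f^{(t)}(1)g_t(z)$, matching the $\zeta^t$ and $\rme^\zeta\zeta^t$ weightings in \eqref{Equation:expzetazAvecgt}. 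The finite sum contributes $\sum_{k=1}^K C_k\sin(k\pi z)$, where $C_k$ is obtained by pulling the factor $2\pi k/(2\pi\rmi)=-k\rmi$ out of the integral; this reproduces \eqref{Equation:Ck} exactly, and since the range of $k$ is finite it raises no convergence issue.

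The last and main point is the absolute convergence, for each fixed $z$, of the two infinite series in the statement. Since $\sum_{t\in 2\N}g_t(z)\zeta^t$ converges for $|\zeta|<(K+1)\pi$, its radius of convergence in $\zeta$ is at least $(K+1)\pi$, so Cauchy--Hadamard gives $\limsup_{t}|g_t(z)|^{1/t}\le 1/((K+1)\pi)$, and likewise for $g_t(1-z)$. On the other hand, the estimate recalled in Section~\ref{SS:Introduction1} yields $\limsup_{t}|f^{(t)}(0)|^{1/t}=\limsup_{t}|f^{(t)}(1)|^{1/t}=\tau(f)$. Hence the general term of the first series satisfies $\limsup_{t}|f^{(t)}(0)g_t(1-z)|^{1/t}\le \tau(f)/((K+1)\pi)<1$, and similarly for the second, so both converge absolutely by the root test. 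I expect the submultiplicativity bookkeeping for the $\limsup$ of a product (and the restriction of these limits to even indices) to be the only genuinely delicate point; everything else is a direct substitution into the already-established identity \eqref{Equation:expzetazAvecgt}.
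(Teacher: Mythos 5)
Your proposal is correct and follows essentially the same route as the paper: substituting the expansion \eqref{Equation:expzetazAvecgt} into the Laplace integral $f(z)=\frac{1}{2\pi\rmi}\int_{|\zeta|=r}\rme^{\zeta z}F(\zeta)\,\rmd\zeta$, integrating term by term using the uniform convergence on $|\zeta|=r$, and identifying the integrals $\frac{1}{2\pi\rmi}\int_{|\zeta|=r}\zeta^tF(\zeta)\,\rmd\zeta$ and $\frac{1}{2\pi\rmi}\int_{|\zeta|=r}\zeta^t\rme^\zeta F(\zeta)\,\rmd\zeta$ with $f^{(t)}(0)$ and $f^{(t)}(1)$. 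Your closing root-test argument for absolute convergence (where, restricted to even indices, you should write $\limsup_t|f^{(t)}(0)|^{1/t}\le\tau(f)$ rather than equality, which is all you need) is a harmless variant of the bound $|f^{(t)}(0)|\le r^{t+1}\max_{|\zeta|=r}|F(\zeta)|$ combined with $\sum_t|g_t(1-z)|\,r^t<\infty$ that underlies the paper's one-line justification.
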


\begin{proof}
Let $r$ satisfy $\tau(f)<r<(K+1)\pi$. 
From the absolute and uniform convergence on $|\zeta|=r$ of the series in the right hand side of \eqref{Equation:expzetazAvecgt}, we deduce 
$$
\begin{aligned}
f(z) 
&=\frac{1}{2\pi\rmi} \int_{|\zeta|=r} \rme^{\zeta  z}F(\zeta ) \rmd \zeta 
\\
&=
\sum_{ t \in 2\N}g_t(1-z) 
\frac{1}{2\pi\rmi} \int_{|\zeta|=r} \zeta^t F(\zeta ) \rmd \zeta 
 +\sum_{t \in 2\N}g_t(z)  
\frac{1}{2\pi\rmi} \int_{|\zeta|=r} \zeta^t \rme^\zeta F(\zeta ) \rmd \zeta \\
&\qquad\qquad\qquad\qquad\qquad\qquad\qquad\qquad
+
\sum_{k=1}^K C_k \sin(k\pi z),
\end{aligned}
$$
with
$$
\frac{1}{2\pi\rmi} \int_{|\zeta|=r} \zeta^t F(\zeta ) \rmd \zeta =f^{(t)}(0)
 \and
 \frac{1}{2\pi\rmi} \int_{|\zeta|=r} \zeta^t \rme^\zeta F(\zeta ) \rmd \zeta=f^{(t)}(1).
  $$
\end{proof}

\noindent
{\bf Example.} The Laplace transform of $f(z)=\sin(\pi z)$ is $F(\zeta)=\frac \pi {\zeta^2+\pi^2}$ and for $\pi<r<2\pi$ we have
$$
\int_{|\zeta|=r}\frac {1+\rme^\zeta}{(\zeta^2+\pi^2)^2}\rmd \zeta= \frac  \rmi \pi,
$$
hence for this function $f$, we have
$$
C_1=
-\rmi  \int_{|\zeta|=r}
\frac{1+\rme^\zeta}
{\zeta^2+\pi^2}
F(\zeta ) \rmd \zeta 
=
-\rmi  \pi \int_{|\zeta|=r}
\frac{1+\rme^\zeta}
{(\zeta^2+\pi^2)^2}
 \rmd \zeta 
=1,
$$
as expected.

In \cite{BivariateLidstone} and \cite{MultivariateLidstone}, we will need the following variant of \eqref{Equation:GK}. 
The  function  $ H_K(\zeta,z)$   defined by
\begin{equation}\label{Equation:HK} 
 \sinh(\zeta z) \coth(\zeta)
= -2\pi\sum_{k=1}^K \frac{ k\sin(k\pi z)}{\zeta^2+k^2\pi^2}
 +H_K(\zeta,z),
\end{equation}
 is analytic in the domain $\bigl\{(\zeta,z)\in\C^2\;\mid\; 
 |\zeta|<(K+1)\pi \bigr\}$. The map $\zeta\mapsto H_K(\zeta,z)$ is even and the map $z\mapsto H_K(\zeta,z)$ is odd. 
Replacing $z$ with $1-z$ in \eqref{Equation:GK} yields
$$
H_K(\zeta,z)=\cosh(\zeta z)-G_K(\zeta,1-z).
$$

 \begin{corollary}[I.J.~Schoenberg, 1936]
 \label{Corollary:Schoenberg}
Let $f$ be an entire function of finite exponential type $\tau(f)$ satisfying $f^{(t)}(0)=f^{(t)}(1)=0$ for all $t \in 2\N$. Then 
$$ 
f(z)= \sum_{k=1}^K C_k\sin(k\pi z).
$$ 
with $K\le \tau(f)/\pi$ and with  the constants $C_1,\dots,C_K$  given by \eqref{Equation:Ck}.
\end{corollary}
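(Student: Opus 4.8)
The plan is to read off the result almost directly from Buck's expansion (Proposition \ref{Prop:Buck}), choosing the value of $K$ with some care and then observing that the hypotheses annihilate the two Lidstone-type series, leaving only the finite trigonometric sum.

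First I would dispose of the small-type regime $\tau(f)<\pi$. Here Theorem \ref{Theorem:Poritsky} applies directly: the expansion \eqref{Equation:LidstoneExpansionUneVariable} is valid, and since by hypothesis every coefficient $f^{(t)}(0)$ and $f^{(t)}(1)$ vanishes, we obtain $f=0$. This is the empty sum, i.e.\ $K=0$, which is consistent with the asserted bound $K\le\tau(f)/\pi$ since $\tau(f)\ge 0$ for a nonzero entire function. Note that this case must be handled separately, because Proposition \ref{Prop:Buck} is stated for a \emph{positive} integer $K$ and hence does not apply when $\tau(f)<\pi$.

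Next I would assume $\tau(f)\ge\pi$ and set $K=\lfloor \tau(f)/\pi\rfloor$, so that $K$ is a positive integer satisfying $K\le\tau(f)/\pi$. By the definition of the floor we also have $K+1>\tau(f)/\pi$, that is $\tau(f)<(K+1)\pi$, which is exactly the admissibility condition required to invoke Proposition \ref{Prop:Buck}. Applying it would give
$$
f(z)=\sum_{t\in 2\N} f^{(t)}(0)\, g_t(1-z)+\sum_{t\in 2\N} f^{(t)}(1)\, g_t(z)+\sum_{k=1}^K C_k\sin(k\pi z),
$$
with the $C_k$ given by \eqref{Equation:Ck}. The assumption $f^{(t)}(0)=f^{(t)}(1)=0$ for all $t\in 2\N$ makes the first two series vanish term by term, leaving precisely $f(z)=\sum_{k=1}^K C_k\sin(k\pi z)$, as claimed.

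The only delicate point is the bookkeeping around $K$: one must pick $K=\lfloor\tau(f)/\pi\rfloor$ so that both the admissibility hypothesis $\tau(f)<(K+1)\pi$ of Buck's proposition and the asserted bound $K\le\tau(f)/\pi$ hold at once, and one must peel off the regime $\tau(f)<\pi$ (where $K$ would be $0$) by means of the expansion/uniqueness theorem instead. Once $K$ is fixed correctly the conclusion is immediate, since all the genuine analytic work has already been carried out in establishing \eqref{Equation:expzetazAvecgt} and Proposition \ref{Prop:Buck}.
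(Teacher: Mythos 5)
Your proof is correct and takes essentially the same route the paper intends: Corollary \ref{Corollary:Schoenberg} is stated there without a separate argument, precisely because it follows at once from Proposition \ref{Prop:Buck} when the hypotheses annihilate the two $g_t$-series. Your explicit choice $K=\lfloor\tau(f)/\pi\rfloor$ (ensuring both $K\le\tau(f)/\pi$ and the admissibility condition $\tau(f)<(K+1)\pi$) and the separate treatment of the case $\tau(f)<\pi$ via Theorem \ref{Theorem:Poritsky} merely make explicit the bookkeeping the paper leaves implicit.
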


A side result is that the exponential type $\tau(f)$ of a function $f$ satisfying the assumption of Corollary \ref{Corollary:Schoenberg} is an integer multiple of $\pi$.

\end{document}